\numberwithin{equation}{section}
\newtheorem{theorem}[equation]{Theorem}
\newtheorem{corollary}[equation]{Corollary}
\newtheorem{proposition}[equation]{Proposition}
\theoremstyle{definition}
\newtheorem{example}[equation]{Example}
\newtheorem{remark}[equation]{Remark}
\newcommand{\nc}{\newcommand}
\nc{\cH}{\mathcal{H}} \nc{\cA}{\mathcal{A}} \nc{\cG}{\mathcal{G}}
\nc{\cC}{\mathcal{C}} \nc{\cO}{\mathcal{O}} \nc{\cI}{\mathcal{I}}
\nc{\cB}{\mathcal{B}} \nc{\cY}{\mathcal{Y}} \nc{\cK}{\mathcal{K}}
\nc{\cX}{\mathcal{X}} \nc{\cS}{\mathcal{S}} \nc{\cE}{\mathcal{E}}
\nc{\cF}{\mathcal{F}} \nc{\cZ}{\mathcal{Z}} \nc{\cQ}{\mathcal{Q}}
\nc{\cN}{\mathcal{N}} \nc{\cP}{\mathcal{P}} \nc{\cL}{\mathcal{L}}
\nc{\cM}{\mathcal{M}} \nc{\cR}{\mathcal{R}} \nc{\cT}{\mathcal{T}}
\nc{\cW}{\mathcal{W}} \nc{\cU}{\mathcal{U}} \nc{\cD}{\mathcal{D}}
\nc{\cJ}{\mathcal{J}} \nc{\cV}{\mathcal{V}}
\nc{\fr}{{\rightarrow}}
\newcommand{\Q}{\mathbb{Q}}
\newcommand{\Z}{\mathbb{Z}}
\newcommand{\cp}{\mathbb{C}}
\newcommand{\pr}{\mathbb P}
\newcommand{\F}{\mathbb F}
\newcommand{\sym}{\mbox{\upshape{Sym}}}
\newcommand{\of}{\omega_f}
\newcommand{\rk}{\mbox{\upshape{rank}}}
\newcommand{\om}{\omega}
\newcommand{\oo}{\mathcal O}
\newcommand{\pic}{\mbox{\upshape{Pic}}}
\title{Slopes of trigonal fibred surfaces and of higher dimensional fibrations }
\author{M.A. Barja \footnote{Partially supported by MEC-MTM2006-14234-C02-02 and by Generalitat de Catalunya 2005SGR00557} , L. Stoppino \footnote{Partially
supported by PRIN 2005 ``Spazi di Moduli e Teorie di Lie'', FAR 2008 (Pavia) \textit{Variet\`a algebriche, calcolo algebrico, grafi orientati e topologici}}}
\begin{document}

\maketitle

\begin{abstract}
We give lower bounds for the slope of higher dimensional
fibrations $f\colon X \longrightarrow B$ over curves under conditions
of GIT-semistability of the fibres, using a generalization of a
method of Cornalba and Harris.
With the same method we establish a sharp
lower bound for the slope of trigonal fibrations of even genus and
general Maroni invariant; in particular this result proves a conjecture due to
 Harris and Stankova-Frenkel.
\end{abstract}

\medskip

 {\small {\it MSC 2000:} Primary 14D06, Secondary 14J10, 14H10}

 \bigskip

\section{Introduction and preliminaries}

Given a fibration over a curve $f\colon X
\longrightarrow B$ ($X,B$ complex, projective varieties, $B$ a
smooth curve, $f$ surjective with connected fibres) and a line
bundle ${\cal L}={\cal O}_X(L)$ on $X$, we can define the {\it
slope of the pair $(f,\cal L)$} to be the quotient

$$s(f,{\cal L})=\frac{L^n}{{\rm deg}{f_*{\cal L}}}$$

\noindent provided ${\rm deg}{f_*{\cal L}} \neq 0$, where $n$ is
the dimension of $X$.
When ${\cal L}=\omega_f$, the relative dualizing sheaf of $f$, we
simply call it the {\it slope} of $f$ and will denote it as
$s(f)$. Lower bounds for the slope have been extensively studied
in the literature (e.g., \cite{A-K}, \cite{B-S}, \cite{B-Z},
\cite{LS}, \cite{X}, \cite{konnocliff}, \cite{konnotrig}) for the
case of fibred surfaces ($n=2$) and some results are known in
dimension $n=3$ (\cite{Barja3folds}, \cite{ohno}).

In this paper, we study this problem using a generalized version
of a theorem of Cornalba-Harris (\cite{C-H},
\cite{LS}).
This method provides  a general result to produce lower bounds of
$s(f,\cal L)$ provided the pair $(F, |{\cal L}_{|F}|)$ (where $F$
is a general fibre of $f$) is semistable in the sense of Geometric Invariant Theory.
In Chapter 2 we recall this result and derive the following consequences
(see corollaries \ref{CH2} and \ref{CH3}
for a more detailed statement).

\begin{theorem}
Under the above hypotheses, assume $RŒf_*{\cal L}^h=0$ for $i>0$
and $h\gg0$

\begin{itemize}

\item[i)] If $|{\cal L}_{|F}|$ induces an embedding, we have

$$
L^n \geq n\frac{(L_{\vert F})^{n-1}}{h^0(F,\cL_{\vert F})}\deg
f_*\cL. $$

\item[ii)] If $|{\cal L}_{|F}|$ induces an generically finite rational map onto a variety of degree $d$ and $f_*{\cal L}^h$
 is nef, we have

$$
L^n \geq n\frac{d}{h^0(F,\cL_{\vert F})}\deg f_*\cL. $$

\end{itemize}

\end{theorem}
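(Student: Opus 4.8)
The plan is to derive this theorem as a consequence of the generalized Cornalba–Harris inequality, which I'll assume is stated just before this theorem as the main tool of Chapter 2. Recall that the Cornalba–Harris method, applied to the pair $(F,|\cL_{|F}|)$ when it is GIT-semistable, produces an inequality of the shape
$$
\left(\text{some positive multiple of } L^n\right)\ \geq\ \left(\text{a combinatorial constant}\right)\cdot \deg f_*\cL,
$$
obtained by pushing forward to $B$ the relation coming from the $h$-th Veronese-type embedding and taking degrees. Concretely, one considers for $h\gg 0$ the natural multiplication map
$$
\sym^h f_*\cL \longrightarrow f_*\cL^h,
$$
whose generic surjectivity (guaranteed by $R^i f_*\cL^h=0$ for $i>0$ together with the hypothesis that $|\cL_{|F}|$ behaves well on the general fibre) lets one compare $\deg f_*\cL^h$ with $\deg f_*\cL$, while Riemann–Roch on $X$ computes $\deg f_*\cL^h$ asymptotically in $h$ in terms of $L^n$.

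**Step 1.** Write the asymptotic expansion. For $h\gg 0$ with $R^i f_*\cL^h=0$, one has $\deg f_*\cL^h = \chi(X,\cL^h) - (\text{stuff independent of } h \text{ via base-change on } B)$, and by asymptotic Riemann–Roch
$$
\deg f_*\cL^h = \frac{L^n}{n!}\,h^n + O(h^{n-1}).
$$
**Step 2.** On the general fibre, $h^0(F,\cL_{|F}^h)$ grows like $\deg(\text{image})\cdot \frac{(h\cdot\deg)^{n-1}}{(n-1)!}$; precisely, in case (i) where $|\cL_{|F}|$ embeds $F$ as a variety of degree $(L_{|F})^{n-1}$, we get $h^0(F,\cL_{|F}^h)=\frac{(L_{|F})^{n-1}}{(n-1)!}h^{n-1}+O(h^{n-2})$, and in case (ii) the leading coefficient is $\frac{d}{(n-1)!}h^{n-1}+O(h^{n-2})$.
**Step 3.** The Cornalba–Harris inequality (in the nef form, using that $f_*\cL^h$ — or $f_*\cL$ — is nef, which is automatic in case (i) since $\cL_{|F}$ very ample forces the relevant positivity, and is assumed in case (ii)) reads, after dividing by $h^0(F,\cL_{|F})=h^0(F,\cL_{|F}^1)$ and passing to the limit $h\to\infty$:
$$
\frac{L^n}{n!}\ \geq\ \frac{1}{h^0(F,\cL_{|F})}\cdot\frac{(L_{|F})^{n-1}}{(n-1)!}\cdot\deg f_*\cL
$$
in case (i), and the same with $(L_{|F})^{n-1}$ replaced by $d$ in case (ii). Multiplying through by $n!$ and noting $n!/(n-1)! = n$ gives exactly the two displayed inequalities.

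**The main obstacle** is Step 3: making precise how the generalized Cornalba–Harris theorem is invoked and verifying its hypotheses on the general fibre. In case (i) one must check that an embedding (rather than merely a finite map) of the general fibre, together with GIT-semistability of the associated Chow/Hilbert point, is what the earlier theorem requires — and that no semistability assumption needs to be added because very ampleness plus the $h\gg0$ choice supplies it, or else that it is part of "the above hypotheses." In case (ii) the subtlety is that $|\cL_{|F}|$ only gives a generically finite rational map onto a degree-$d$ variety, so one works with the image and the degree $d$ replaces the self-intersection; here the nefness of $f_*\cL^h$ is genuinely needed to control the error terms and to ensure the pushed-forward inequality has the right sign. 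Once the invocation is set up correctly, everything else is the routine leading-term bookkeeping of Steps 1–2, which I would not write out in full.
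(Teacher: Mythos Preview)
Your overall strategy matches the paper's: apply the Cornalba--Harris theorem (which asserts that the line bundle $\cF_h=(\det f_*\cL)^{-hN_h}\otimes(\det\cG_h)^r$ is effective on $B$, where $\cG_h\subseteq f_*\cL^h$ is a subsheaf containing the image of $\sym^h f_*\cL\to f_*\cL^h$ and coinciding with it generically), then extract the leading $h^n$-coefficient of $\deg\cF_h$ via asymptotic Riemann--Roch. Your Step~1 and the final arithmetic are correct.

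However, two points in your argument are genuinely confused. First, in case~(i) you claim nefness of $f_*\cL^h$ is ``automatic since $\cL_{|F}$ very ample forces the relevant positivity''; this is false, and it is not how the paper avoids the nefness hypothesis. Rather, when $|\cL_{|F}|$ embeds $F$, the map $\sym^h H^0(F,\cL_{|F})\to H^0(F,\cL_{|F}^h)$ is surjective for $h\gg 0$, so one may take $\cG_h=f_*\cL^h$ itself; then $\det\cG_h=\det f_*\cL^h$ is computed directly by Riemann--Roch, and no nefness is needed. (The vanishing $R^if_*\cL^h=0$ is used only to identify $\deg f_*\cL^h$ with $\deg f_!\cL^h$; it has nothing to do with surjectivity of the multiplication map, contrary to your parenthetical remark.) Second, in case~(ii) you write that $h^0(F,\cL_{|F}^h)$ has leading term $d\,h^{n-1}/(n-1)!$; this is wrong when the map has degree $>1$. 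The quantity with that leading term is $N_h=\rk\cG_h$, the dimension of the image of the multiplication map on the general fibre, which equals $h^0$ of the image variety in $\pr^{r-1}$. Since $\cG_h$ is now a proper subsheaf of $f_*\cL^h$, the nefness hypothesis is used precisely to get $\deg\cG_h\leq\deg f_*\cL^h$ (the quotient has non-negative degree), which replaces the unknown $\deg\cG_h$ by the Riemann--Roch--computable $\deg f_*\cL^h$ in the leading coefficient. With these two corrections your sketch becomes the paper's proof.
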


\smallskip

The method applied directly to families of canonical varieties
would give very interesting higher dimensional slope inequalities.
However, already in the case of surfaces it is very hard to check the
stability assumption. For instance it is not known if a
``general''  surface of general type satisfies it or not.
In the case of  hypersurfaces with high enough degree,
and with log-terminal singularities the stability has been proven  by  Tian
(\cite{Tian}) using methods of differential geometry.
 Then we can deduce a bound on the slope, when the
fibres are hyperfurfaces which are {\it canonical}, i.e., such
that its canonical map is birational (see \ref{hypersurfaces})

\begin{theorem}
Let $f \colon X\longrightarrow B$ be a surjective flat morphism from a
$\Q$-factorial projective  $n$-fold $X$ to a smooth complete curve
$B$. Suppose that the fibration is relatively minimal and that the
general fibres $F$ are minimal canonical varieties (of dimension
$n-1$) with $p_g=n+1$, $K_F^{n-1}=n+2$, such that its canonical
image has at most log-terminal singularities. Then

$$
K_f^n\geq \frac{n(n+2)}{(n+1)}\deg f_*\omega_f.
$$

\end{theorem}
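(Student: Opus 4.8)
The plan is to deduce this from the slope bound for fibrations in canonical hypersurfaces (the result quoted as \ref{hypersurfaces}, which is itself part~ii) of the first Theorem above combined with Tian's semistability theorem \cite{Tian}). Concretely, I would take $\cL=\omega_f$, so that $\cL_{|F}=\omega_F$, $h^0(F,\cL_{|F})=p_g=n+1$, and $L^n=K_f^n$, and then show that the numerical hypotheses force the general fibre $F$ to be, up to birational equivalence, a canonically embedded hypersurface of degree $n+2$ in $\pr^{n}$ with at worst log-terminal singularities; once this is established, plugging $d=n+2$ and $h^0(F,\omega_F)=n+1$ into \ref{hypersurfaces} gives exactly $K_f^n\geq \frac{n(n+2)}{n+1}\deg f_*\omega_f$.

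The geometric heart is the identification of the fibre. Since $p_g(F)=h^0(F,\omega_F)=n+1$, the canonical map $\varphi=\fidi$ with $D=K_F$ sends $F$ into $\pr^{n}=\pr\big(H^0(F,\omega_F)^\vee\big)$; as $F$ is of general type, $\varphi$ is birational onto its image $\bar F:=\varphi(F)$, which is a non-degenerate $(n-1)$-dimensional subvariety of $\pr^{n}$, hence a hypersurface of some degree $d$. A hypersurface is Gorenstein, and a Gorenstein log-terminal singularity is canonical, so the hypothesis gives that $\bar F$ has at worst canonical singularities; by adjunction $\omega_{\bar F}\cong\oo_{\bar F}(d-n-1)$. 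Because $\bar F$ is birational to the variety of general type $F$, it is itself of general type, forcing $d-n-1\geq 1$, i.e. $d\geq n+2$ and $\omega_{\bar F}$ ample. Then, since the volume is a birational invariant and is computed by the top self-intersection of the nef-and-big canonical class on a minimal (resp. canonical) model, $K_F^{n-1}=\omega_{\bar F}^{\,n-1}=(d-n-1)^{n-1}d$. The equation $(d-n-1)^{n-1}d=n+2$ together with $d\geq n+2\geq 4$ has the unique solution $d=n+2$; in particular $\omega_F=\varphi^*\oo_{\pr^n}(1)$, $\varphi$ is a birational morphism, and $\bar F\subset\pr^n$ is a degree-$(n+2)$ hypersurface with at worst log-terminal singularities.

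It then remains to verify that the hypotheses of part~ii) of the first Theorem (equivalently of \ref{hypersurfaces}) hold for $\cL=\omega_f$: relative Kawamata--Viehweg vanishing yields $R^if_*\omega_f^{\otimes h}=0$ for $i>0$ and $h\gg 0$, since $f$ is relatively minimal with fibres of general type and hence $\omega_f$ is $f$-nef and $f$-big; $f_*\omega_f^{\otimes h}$ is nef by the Fujita--Kawamata--Viehweg semipositivity theorems over the curve $B$; and $(F,|\omega_F|)\cong(\bar F\hookrightarrow\pr^n,\oo(1))$ is GIT-semistable because $\bar F$ is a hypersurface with at worst log-terminal singularities, by Tian's result. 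Part~ii) of the Theorem, with $d=n+2$ and $h^0(F,\omega_F)=n+1$, then gives
\[
K_f^n=L^n\;\geq\; n\,\frac{d}{h^0(F,\omega_F)}\,\deg f_*\omega_f\;=\;\frac{n(n+2)}{n+1}\,\deg f_*\omega_f.
\]
I expect the main obstacle to be the bookkeeping with singularities: making sure that "log-terminal canonical image'' genuinely upgrades to canonical singularities, that the volume identity $K_F^{n-1}=\omega_{\bar F}^{n-1}$ is legitimate for the possibly singular $F$ and $\bar F$, and that the vanishing and nefness statements for $f_*\omega_f^{\otimes h}$ survive on the $\Q$-factorial, possibly singular total space $X$; and, on the GIT side, that Tian's semistability theorem is available in precisely the Chow/Hilbert formulation the Cornalba--Harris machinery of the first Theorem requires. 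If \ref{hypersurfaces} is already phrased for fibrations whose fibres are canonical hypersurfaces with log-terminal canonical image, the present statement reduces to the purely numerical computation $d=n+2$, $h^0=n+1$ carried out above.
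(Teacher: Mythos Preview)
Your proposal is correct and follows essentially the same route as the paper: apply the Cornalba--Harris inequality of Corollary~\ref{CH3} (packaged in Remark~\ref{canonical}) with $\cL=\omega_f$, and invoke Tian's theorem to guarantee Hilbert stability of the canonical image. The paper's proof is literally one line referring to Remark~\ref{canonical} and Tian, so your write-up is considerably more detailed than the original; in particular, the paper simply reads off $d=K_F^{n-1}=n+2$ directly from the birationality of the canonical map (as in the last sentence of Remark~\ref{canonical}), whereas you reach $d=n+2$ by the volume identity $K_F^{n-1}=K_{\bar F}^{n-1}=(d-n-1)^{n-1}d$ and then solve---a valid but slightly roundabout path to the same number. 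Your side remark that $\varphi$ is then an honest morphism is not needed for the argument (Corollary~\ref{CH3} only requires a generically finite rational map), so you can safely drop it.
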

Examples of such fibres $F$ are smooth hypersurfaces of
$\mathbb{P}^n$ of degree $n+2$.

It is worth mentioning that this theorem is the first result proving
lower bounds for the slope of fibrations of dimension higher than $3$.

Eventually we give a new evidence of the necessity of the stability assumption
in the C-H theorem (Remark \ref{controhno}).

\bigskip

Chapter 3 is devoted to the study of a particular type of fibred
surfaces, the so called {\it trigonal} fibrations (i.e., when the
general fibre is a trigonal curve).
An intensively studied  problem in the last decades  is to find of
lower bounds for the slope of fibred surfaces.  In general, the so
called {\it slope inequality} holds (\cite{X} and  \cite{C-H},
\cite{LS})
$$s(f) \geq4-\frac{4}{g}.$$
It is sharp and equality is satisfied only for certain kind of
{\em hyperelliptic} fibrations \cite {A-K}, \cite{LS}.

There are several reasons to conjecture that the gonality of the
general fibre of $f$ has an (increasing) influence on the lower
bound of the slope (see \cite{konnocliff}, \cite{B-S},
\cite{S-F} and Remark \ref{genericity}).
So the  next  natural problem in this framework is the one of
studying trigonal fibrations.
The main known results are the following.

\medskip

\noindent {\it (Konno, \cite{konnotrig})
If $f\colon X\longrightarrow B$ is a trigonal fibration of genus $g\geq 6$, then
\begin{equation}\label{trigk}
s(f)
\geq \frac{14(g-1)}{3g+1}.
\end{equation}}

\noindent {\it (Stankova-Frenkel, \cite{S-F} prop.9.2 and prop.12.3)
If $f\colon X\longrightarrow B$ is a trigonal semistable fibration, then
\begin{equation}\label{trigf}
s(f)\geq \frac{24(g-1)}{5g+1}.
\end{equation}
\noindent This bound is sharp, and if equality holds the general fibres
have  Maroni invariant $\geq 2$.

\noindent Moreover, if $g$ is even and the following conditions hold:

$\bullet$ the general fibres have  Maroni invariant $0$;

$\bullet$  the singular fibres are irreducible and  have only certain kind of singularities;

\noindent then the slope satisfies the  bound
\begin{equation}\label{general}
s(f)\geq \frac{5g-6}{g}.
\end{equation}
}
Harris and Stankova-Frenkel conjecture (Conjecture 12.1 of \cite{S-F})  bound (\ref{general})
to hold without the extra condition on singular fibres.
It has to be remarked that the  bounds (\ref{trigf}) and (\ref{general}), although better than (\ref{trigk}),
hold only for semistable fibrations i.e. for fibred surfaces such that all the fibres are semistable curves
in the sense of Deligne-Mumford; this is, from the point of view of fibred surfaces, a strong restriction.
Indeed, starting from any fibred surface, one can construct a semistable one by the process
of semistable reduction, but the slope cannot be controlled through this process, as shown  in
\cite{T1}.

\medskip

The main result of Chapter 3 is the following (Theorem
\ref{slopetrigonal}):

\begin{theorem}\label{main}
Let $f\colon S \longrightarrow B$ be a relatively minimal fibred surface
such that the general fibre $C$ is a trigonal curve of even genus $g\geq 6$
and the general fibre has  Maroni invariant $0$.
Then the slope satisfies inequality (\ref{general}).
\end{theorem}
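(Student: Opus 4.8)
The plan is to apply the generalized Cornalba--Harris theorem (Theorem 1.3, in the form of Corollaries \ref{CH2} and \ref{CH3}) to the fibration $f$ with $\cL = \omega_f$, exploiting the particular geometry of trigonal curves of even genus with Maroni invariant $0$. Recall that a trigonal curve $C$ of genus $g$ sits on a Hirzebruch surface $\F_n$ (with $n$ the Maroni invariant) as a divisor in the class $3C_0 + \frac{g+3n+2}{2}\,\mathfrak{f}$, and the canonical map embeds $C$ into this $\F_n$, realized as a surface of minimal degree $g-2$ in $\pr^{g-1}$. For $g$ even and Maroni invariant $0$ one has $\F_0 = \pr^1\times\pr^1$, and the canonical embedding is particularly symmetric. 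First I would set up the relevant line bundle on the fibre: since $|\omega_C|$ does not induce an embedding onto a variety of minimal degree in a way that directly gives the sharp constant, the right move is \emph{not} to apply part i) of Theorem 1.3 to $\omega_f$ itself, but rather to a carefully chosen sub-linear-system, or equivalently to twist by the trigonal pencil. Concretely, let $A$ be the relative degree-$3$ divisor cut by the $g^1_3$; the key auxiliary bundle will be something like $\omega_f^{\otimes 2}(-A)$ or a bundle whose restriction to $F$ is the hyperplane bundle of the scroll in its minimal-degree embedding.

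The core computation is then a Bogomolov-type / Cornalba--Harris estimate on the fibre surface. The steps I would carry out, in order: (1) identify on a general fibre $C$ the linear system whose GIT-stability is known (here the canonical system of a trigonal curve, whose stability for Maroni invariant $0$ can be checked directly, or reduced to the stability of the rational normal scroll together with the defining equation of $C$ on it); (2) compute $h^0(C,\cL_{|C})$ and the degree $d = \deg \varphi_{|\cL_{|C}|}(C)$ appearing in Theorem 1.3 — for the canonical embedding these are $g$ and $2g-2$ respectively, but after the twisting in step~(0) one gets the numbers producing the constant $\frac{5g-6}{g}$; (3) feed these into the inequality $L^n \ge n\,\frac{d}{h^0}\deg f_*\cL$ and translate back from $L$ to $\omega_f$ using the relations $K_f^2$, $\deg f_*\omega_f = \chi_f$, $L = a\,\omega_f - bA$ with the numerical identities for trigonal fibrations ($\omega_f\cdot A$, $A^2$ in terms of the scroll invariants, and $\deg f_* A = g-1+\deg(\text{some line bundle on }B)$); (4) optimize over the twisting parameter to land exactly on $\frac{5g-6}{g}$.

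The main obstacle I expect is step~(1): verifying the GIT-semistability hypothesis required by the Cornalba--Harris machine. Unlike the hyperelliptic or general-$k$-gonal situation, for trigonal curves one must show semistability of the pair $(C, |\cL_{|C}|)$, and the natural route is to use that the image is a curve on a surface of minimal degree (a smooth quadric when the Maroni invariant is $0$) and that $C$ is a divisor of the appropriate bidegree; one then reduces semistability of the pair to a statement about the Chow/Hilbert stability of this configuration, which for Maroni invariant $0$ and even genus should follow from the fact that the bidegree is balanced (this is precisely where the parity of $g$ and the vanishing of the Maroni invariant enter). A secondary difficulty is bookkeeping: correctly relating the slope $s(f,\cL)$ for the twisted bundle $\cL$ back to $s(f) = s(f,\omega_f)$ requires controlling $\deg f_*(\omega_f(-A))$ and the intersection numbers $\omega_f\cdot A$ and $A^2$ on $S$, for which I would invoke the structure of the relative trigonal construction (the relative scroll $\pr(\cE)\to B$ with $\cE$ a rank-$2$ bundle, $S$ embedded as a relative divisor) and the Maroni-invariant-$0$ condition to pin down $\cE$ up to the ambiguity that does not affect the final numerical bound.
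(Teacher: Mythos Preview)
Your proposal has a genuine gap: you apply the Cornalba--Harris machinery to the surface fibration $f\colon S\to B$ itself (possibly with a twisted line bundle on $S$), whereas the paper's key move is to pass to the \emph{threefold} $\phi\colon W\to B$, the relative quadric hull of the canonical image. The general fibre of $\phi$ is not the curve $C$ but the scroll $\F_0=\pr^1\times\pr^1$ embedded in $\pr^{g-1}$ as a surface of minimal degree $g-2$, with $h^0$ of the hyperplane class equal to $g$. For this pair Hilbert semistability is Kempf's theorem, and Corollary~\ref{CH2} applied to the threefold gives
\[
L^3\ \ge\ 3\,\frac{g-2}{g}\,\deg\phi_*\cO_W(L)\ =\ 3\,\frac{g-2}{g}\,\chi_f,
\]
using Konno's identification $\phi_*\cO_W(L)=f_*\omega_f$. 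The translation back to $K_f^2$ is then the single inequality $K_f^2\ge 2\chi_f+L^3$ (Proposition~\ref{Konno}(iii)), which immediately yields $K_f^2\ge\frac{5g-6}{g}\chi_f$.

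Your route on $S$ faces two concrete obstacles. First, applying Cornalba--Harris directly to $(C,|K_C|)$, even granting semistability, gives only $K_f^2\ge\frac{4(g-1)}{g}\chi_f$, the ordinary slope inequality; the constant $\frac{5g-6}{g}$ does not appear. Second, your proposed fix---twisting by the trigonal divisor $A$ and optimising---is not made precise: you would need a specific line bundle $\cL$ on $S$ whose restriction to $C$ is Hilbert semistable \emph{and} whose numerics, after relating $(K_f-aA)^2$, $\deg f_*\cL$, etc., produce exactly $\frac{5g-6}{g}$. No such bundle is identified, and the sentence about ``a bundle whose restriction to $F$ is the hyperplane bundle of the scroll'' conflates the curve $C$ with the surface $\F_0$; the hyperplane class of the scroll restricted to $C$ is just $K_C$ again. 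The decisive idea you are missing is precisely to change the total space to the threefold $W$, where both the semistability input (Kempf) and the numerical bridge to $K_f^2$ (Konno) are available off the shelf.
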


Observe that we are not assuming $f$ to be semistable. In
particular, we give a positive answer to the Harris-Stankova-Frenkel conjecture.
Moreover, in Theorem \ref{slopetrigonal}, we prove at the same time that   (\ref{general}) holds
for any  fibration of genus $6$  whose general fibres have a $g^2_5$, thus  improving the bound
proved by Konno in \cite{konnotrig}, which is $96/25$.

This result can be seen as a first step when searching for an
increasing dependence of the slope from the gonality of the
general fibres. The assumption on the Maroni invariant assures
that the fibres are general in the locus of trigonal curves,
consistently with the conjectures (see Remark \ref{genericity}).

\medskip

We prove this theorem applying the C-H method to a fibred $3$-fold naturally
associated to the fibred surface; indeed  the slope of $f$ is related to to the one of the
relative quadric-hull  $W\longrightarrow B$ of the
trigonal fibration $f\colon S\longrightarrow B$ (cf.
\cite{konnotrig}, \cite{Bcan}), for a suitable line bundle on it.
In the case of Maroni invariant $0$, the general fibre of the hull
is $\pr^1\times \pr^1$, embedded as a surface of minimal degree in
$\pr^{g-1}$, this embedding being GIT semistable by a result of
Kempf (\cite{K}).

\bigskip

\noindent{\bf Acknowledgements} We wish to thank Maurizio Cornalba, Andreas Leopold Knutsen and Andrea Bruno
for many helpful conversations on this topic.

\section{The Cornalba-Harris Method and the slope of fibrations}

We work over the complex field $\cp$.
Let $X$ be a variety (an integral separated scheme of finite type over $\cp$),
with a linear system $V\subseteq H^0(X,\cL)$, for some line bundle $\cL$ on $X$.
Fix $h\geq 1$ and call $G_h$ the image of the natural homomorphism
\begin{equation}\label{sim}
H^0(\mathbb P^{s}, \mathcal O_{\mathbb {P}^{s}}(h))=
\sym^hV\stackrel{\varphi_h}{-\!\!\!\longrightarrow}H^0(X, \cL^h).
\end{equation}
Set $N_h=\dim G_h$ and take exterior powers
$
\wedge^{N_h}\sym^hV\stackrel{\wedge ^{N_h}\varphi_h}{-\!\!\!\longrightarrow}
\wedge^{N_h}G_h=\det G_h.
$
We can see $\wedge ^{N_h}\varphi_h$ as a well-defined element of
$\pr (\wedge^{N_h}\sym^hV^{\vee})$.

With the above notations, we call $\wedge ^{N_h}\varphi_h\in
\pr(\wedge^{N_h} \mbox{Sym}^hV^{\vee})$, {\em the  generalized
$h$-th Hilbert point associated to the couple  $(X, V)$}.
If $V$ induces an embedding, then for $h\gg 0$ the homomorphism $\varphi_h$
is surjective  and
it is the classical $h$-th Hilbert point.

Consider the  standard representation  $SL(s+1,\cp)\rightarrow SL(V)$;
we get  an induced natural action of $SL(s+1,\cp)$ on  $\pr(\wedge^N \mbox{Sym}^hV^{\vee})$,
and we can introduce the associated notion of GIT (semi)stability:
we say that the $h$-th generalised Hilbert point of the couple  $(X, V)$
is  semistable (resp. stable) if it is GIT semistable (resp. stable) with
 respect to the natural $SL(s+1, \cp)$-action.

We say that $(X,V)$ is  {\em generalised Hilbert stable
(resp. semistable) if its generalised  $h$-th Hilbert point is stable (resp. semistable)
for infinitely many integers $h>0$.}

\smallskip

We  state a generalised version of the Cornalba-Harris theorem.

\begin{theorem}\label{slopecornalbaharris}[\cite{LS}, Theorem 1.5]
Let $X$ be a variety of dimension $n$ and
$\phi \colon X \longrightarrow B$ a  flat morphism over a  curve, and call $F$ a general fibre.
Let $\cL$ be a line bundle on $X$.
Let $h$ be a positive integer, and assume that
 $(F, \vert \cL_{\vert F} \vert)$ has semistable generalised $h$-th Hilbert point.
 Consider a vector subbundle $\cG_h$ of  $\phi_*\cL^h$ such that $\cG_h$ contains the image of the
morphism of sheaves
$$\sym^h\phi_*\cL\longrightarrow \phi_*\cL^h,$$
and coincides with it at general $t\in B$.

Then the line bundle
$$\cF_h:=\left(\det \phi_*\cL^{\phantom{1}} \right)^{-hN_h}\otimes \left(\det\cG_h\right)^r, $$
where $r:=h^0(F, \cL_{\vert F})$, and $N_h:=\rk \cG_h$,  is effective.

\end{theorem}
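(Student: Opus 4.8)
The strategy is to reduce the statement to a pointwise GIT argument over the generic point of $B$, and then globalize. First I would pass to the generic fibre $F$ of $\phi$ and work with the finite-dimensional vector space $V = H^0(F, \cL_{\vert F})$ and the subspace $W \subseteq V$ that is the fibre at the generic point of the image of $\sym^h \phi_* \cL \to \phi_* \cL^h$. The generalised $h$-th Hilbert point of $(F, |\cL_{\vert F}|)$ lives in $\pr(\wedge^{N_h} \sym^h V^\vee)$, and the hypothesis is precisely that it is GIT-semistable for the $SL(V)$-action. By the Hilbert–Mumford numerical criterion (or rather its consequence: a semistable point has a nonvanishing invariant section through it), there exists an $SL(V)$-invariant homogeneous form $P$ on $\wedge^{N_h}\sym^h V^\vee$, of some degree $m$, that does not vanish at the Hilbert point of $(F,V)$.

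Next I would spread this out over $B$. The bundle $\phi_*\cL$ has some rank $s+1$; replacing $B$ by an open set if necessary, everything is locally free, and the morphism $\wedge^{N_h}\sym^h\phi_*\cL \to \det\cG_h$ defines a global section of $\cHom(\wedge^{N_h}\sym^h\phi_*\cL, \det\cG_h)$, i.e. a section of $(\det\cG_h) \otimes (\wedge^{N_h}\sym^h\phi_*\cL)^\vee$. The invariant polynomial $P$, being $SL$-invariant of degree $m$, transforms under the full $GL(V)$ action by a power of the determinant: plugging in the universal section and using that $\det\sym^h\phi_*\cL$ and $\wedge^{N_h}(\cdots)$ are expressible in terms of $\det\phi_*\cL$, one gets a global section of a line bundle which, after chasing the weights, is exactly a positive power of $\cF_h = (\det\phi_*\cL)^{-hN_h}\otimes(\det\cG_h)^r$. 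The key numerical bookkeeping is the weight count: an $SL(V)$-invariant of degree $m$ on $\wedge^{N_h}\sym^h V^\vee$ corresponds to a $GL(V)$-semi-invariant whose determinantal weight, matched against $\dim V = r$ and the exponent $N_h$ coming from the exterior power and $h$ from $\sym^h$, produces the exponents $-hN_h$ and $r$. Since $P$ is nonzero at the generic point, the resulting section of a power of $\cF_h$ is nonzero, hence $\cF_h$ is effective.

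The main obstacle is the weight computation: one must verify that the line bundle on which the globalized invariant $P$ naturally lives is precisely $\cF_h^{\otimes m}$ (possibly after accounting for the contribution of $\sym^h$ versus $\wedge^{N_h}$), rather than some other combination of $\det\phi_*\cL^h$, $\det\phi_*\cL$ and $\det\cG_h$. This is the heart of the Cornalba–Harris mechanism and requires carefully tracking how $SL$-invariance pins down the $GL$-weight. A secondary technical point is the reduction to a vector subbundle: since $\cG_h$ is only assumed to agree with the image sheaf at general $t$, one must check that the section extends across the finitely many bad points — this follows because $\cG_h$ is locally free and the section is a rational section of a line bundle, which automatically extends if it has no poles, and poles are excluded by the fact that $\cG_h \subseteq \phi_*\cL^h$ and the construction only involves honest sheaf maps. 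Once these points are settled, effectivity of $\cF_h$ is immediate from the nonvanishing of $P$ at the generic fibre.
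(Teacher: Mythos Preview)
The paper does not actually prove this theorem: it is quoted verbatim from \cite{LS}, Theorem~1.5, and is used as a black box for the corollaries that follow. So there is no ``paper's own proof'' to compare against. That said, your sketch is exactly the Cornalba--Harris argument as carried out in \cite{C-H} and generalised in \cite{LS}: pick an $SL(V)$-invariant polynomial $P$ of some degree $m$ not vanishing at the generalised Hilbert point of the generic fibre, globalise it using the frame bundle of $\phi_*\cL$, and read off the line bundle on $B$ of which the globalised $P$ is a section.

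Your worry about the weight bookkeeping is well placed but resolves correctly. Scalars $\lambda\cdot\mathrm{id}$ act on $\wedge^{N_h}\sym^h V$ by $\lambda^{hN_h}$, so an $SL(V)$-invariant $P\in\sym^m(\wedge^{N_h}\sym^h V)$ is a $GL(V)$-semi-invariant of weight $(\det)^{mhN_h/r}$ (after replacing $P$ by a power so that $r\mid mhN_h$). A change of local frame of $\phi_*\cL$ by $g$ therefore scales $P(\text{section})$ by $(\det g)^{mhN_h/r}$, while a change of trivialisation of $\det\cG_h$ by $\mu$ scales it by $\mu^m$; hence the globalised $P$ is a section of $(\det\phi_*\cL)^{-mhN_h/r}\otimes(\det\cG_h)^m=\cF_h^{\,m/r}$, as you predicted.

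One small caveat: what this argument literally produces is a nonzero section of a \emph{positive power} of $\cF_h$, which on a curve gives $\deg\cF_h\geq 0$ but not, strictly speaking, that $\cF_h$ itself has a section. In the paper's applications (Corollaries~\ref{CH2} and~\ref{CH3}) only the degree inequality is used, so this distinction is harmless; but if you want to match the word ``effective'' in the statement you should read it as $\Q$-effective. Your remark about extending the section across the bad fibres is also correct and handled exactly as you say: the map $\sym^h\phi_*\cL\to\cG_h$ is an honest morphism of locally free sheaves on all of $B$, so the globalised $P$ is a regular section everywhere, and nonvanishing at the generic point by the semistability hypothesis.
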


\begin{corollary}\label{CH2}
With the assumptions of Theorem \ref{slopecornalbaharris}, suppose moreover that
$X$ is pure dimensional, $\phi$ is proper, and that

(1) the linear system $\vert \cL_{\vert F} \vert$ induces an embedding of the general fibre $F$;

(2)  the sheaves $R^i\phi_*\cL^h$ vanish for $i>0$ for $h$ large enough\footnote{This happens for instance if $\cL$ is $\phi$-ample.}.

\noindent Then, the following inequality holds
\begin{equation}\label{equaz2}
L^n \geq n\frac{(L_{\vert F})^{n-1}}{h^0(F,\cL_{\vert F})}\deg \phi_*\cL.
\end{equation}
\end{corollary}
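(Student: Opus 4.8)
The plan is to deduce Corollary \ref{CH2} from Theorem \ref{slopecornalbaharris} by computing the degree of the effective line bundle $\cF_h$ and then taking an asymptotic limit as $h\to\infty$. Since $\cF_h$ is effective on the curve $B$, we have $\deg\cF_h\geq 0$, which by definition of $\cF_h$ means
\begin{equation}\label{plancontrol}
r\,\deg\cG_h \geq hN_h\,\deg\phi_*\cL,
\end{equation}
where $r=h^0(F,\cL_{|F})$ and $N_h=\rk\cG_h$. So the whole proof reduces to estimating the two quantities $N_h$ and $\deg\cG_h$ in terms of the geometry of $(X,\cL)$ and letting $h$ grow.

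First I would handle $N_h$. Under hypothesis (1), $\cL_{|F}$ embeds $F$ in $\pr^{r-1}$, so the image of $\sym^h H^0(F,\cL_{|F})$ in $H^0(F,\cL_{|F}^h)$ is exactly the degree-$h$ piece of the homogeneous coordinate ring of the embedded fibre, whose dimension is the Hilbert function of $F$. For $h\gg 0$ this equals the Hilbert polynomial, so
$$
N_h = \frac{(L_{|F})^{n-1}}{(n-1)!}\,h^{n-1} + O(h^{n-2}).
$$
Next I would handle $\deg\cG_h$. This is the more delicate point and I expect it to be the main obstacle: one must relate the degree of the subbundle $\cG_h\subseteq\phi_*\cL^h$ to intersection numbers on $X$. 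The idea is that $\deg\cG_h$ differs from $\deg\phi_*\cL^h$ by a nonnegative quantity (since $\cG_h$ is a subsheaf agreeing generically with the image of $\sym^h\phi_*\cL$, and for $h\gg 0$ one should be able to arrange or bound the defect), so it suffices to get a lower bound, and then apply asymptotic Riemann–Roch / Grothendieck–Riemann–Roch to $\phi_*\cL^h$. Here hypothesis (2) is exactly what is needed: the vanishing of $R^i\phi_*\cL^h$ for $i>0$ makes $\chi(X,\cL^h)=\chi(B,\phi_*\cL^h)=\deg\phi_*\cL^h+\rk(\phi_*\cL^h)(1-g(B))$, while on $X$ asymptotic Riemann–Roch gives $\chi(X,\cL^h)=\frac{L^n}{n!}h^n+O(h^{n-1})$. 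Matching the $h^n$ coefficients yields $\deg\phi_*\cL^h=\frac{L^n}{n!}h^n+O(h^{n-1})$.

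Finally I would assemble the pieces. Plugging the asymptotics for $N_h$ and $\deg\cG_h$ (bounded below by $\deg\phi_*\cL^h$ up to lower order, using that $\cG_h$ has full rank $N_h$ inside $\phi_*\cL^h$ and contains the image of $\sym^h\phi_*\cL$) into inequality \eqref{plancontrol} and dividing by $h^n$, the leading terms give
$$
r\cdot\frac{L^n}{n!} \geq \frac{(L_{|F})^{n-1}}{(n-1)!}\,\deg\phi_*\cL,
$$
and multiplying through by $n!/r$ produces exactly \eqref{equaz2}. The one subtlety to be careful about is the comparison between $\deg\cG_h$ and $\deg\phi_*\cL^h$: since $\cG_h$ is a subbundle of $\phi_*\cL^h$ of the \emph{same} rank $N_h$ as the generic rank of the image of $\sym^h\phi_*\cL$, but $\phi_*\cL^h$ itself may have larger rank, one uses that $\cG_h$ sits inside $\phi_*\cL^h$ and that the quotient $\phi_*\cL^h/\cG_h$ — being a sheaf whose rank is $O(h^{n-2})$ and which is a quotient of a nef-ish bundle — contributes only to lower order, so that $\deg\cG_h\geq\deg\phi_*\cL^h - (\text{lower order})$. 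This is where one must invoke standard positivity (e.g. weak positivity of $\phi_*\cL^h$, or the fact that torsion-free quotients have controlled degree) to close the estimate cleanly; everything else is bookkeeping with Hilbert polynomials.
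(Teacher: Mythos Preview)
Your overall strategy --- compute $\deg\cF_h$ asymptotically and extract the leading coefficient --- is exactly right, and your computations of $N_h$ and $\deg\phi_*\cL^h$ via Riemann--Roch are correct. But you miss the key consequence of hypothesis~(1), and this leaves a genuine gap in the step you yourself flag as ``the main obstacle''.

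Since $|\cL_{|F}|$ embeds $F$ in $\pr^{r-1}$, Serre vanishing for the ideal sheaf $\cI_F$ gives that
\[
\sym^h H^0(F,\cL_{|F}) \longrightarrow H^0(F,\cL_{|F}^h)
\]
is \emph{surjective} for $h\gg 0$. Hence $N_h = h^0(F,\cL_{|F}^h) = \rk\phi_*\cL^h$, and the image of $\sym^h\phi_*\cL\to\phi_*\cL^h$ is generically all of $\phi_*\cL^h$. One may therefore take $\cG_h = \phi_*\cL^h$ in Theorem~\ref{slopecornalbaharris}; then $\deg\cG_h = \deg\phi_*\cL^h$ on the nose, and your assembly step goes through with no further estimate. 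This is exactly what the paper does.

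Your alternative route --- comparing $\deg\cG_h$ to $\deg\phi_*\cL^h$ via positivity of the quotient --- does not close as written. First, under~(1) the quotient $\phi_*\cL^h/\cG_h$ has rank \emph{zero} for large $h$, not $O(h^{n-2})$. Second, even granting a nefness statement for $\phi_*\cL^h$, what you would get is $\deg(\phi_*\cL^h/\cG_h)\geq 0$, i.e.\ $\deg\cG_h \leq \deg\phi_*\cL^h$; this is the inequality one actually chains with $r\,\deg\cG_h \geq hN_h\,\deg\phi_*\cL$ to reach the conclusion, not the reverse inequality $\deg\cG_h \geq \deg\phi_*\cL^h - (\text{lower order})$ that you wrote. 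That argument, with nefness of $\phi_*\cL^h$ assumed as an explicit \emph{extra} hypothesis, is precisely Corollary~\ref{CH3}. In Corollary~\ref{CH2} no such hypothesis is available, and none is needed: hypothesis~(1) already lets you set $\cG_h = \phi_*\cL^h$.
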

\begin{proof}
By the first assumption, we can apply Theorem \ref{slopecornalbaharris} with $\cG_h=\phi_*\cL^h$.
Hence,  for infinitely many $h>0$ the line bundle $\cF_h$ is effective.
Now, under our assumptions $\deg \cF_h$ is a degree $n$ polynomial in $h$ with
coefficients in the rational Chow ring $CH^1(X)_\Q$.
Its leading coefficient has to be pseudo-effective, hence to have non-negative degree.
The statement now follows from an intersection-theoretical computation.
Indeed,  the Riemann-Roch formula for singular varieties (cf. \cite{fulton}, Corollary 18.3.1)
implies the following expansions:
$$N_h= \frac{(L_{\vert F})^{n-1}}{(n-1)!}+ O(h^{n-2}),$$
and
$$\deg \phi_* \cL^h= \deg \phi_! \cL^h=\frac{h^n L^n}{n!}+ O(h^{n-1}),$$
because the higher direct images vanish by the second assumption.
\end{proof}

\begin{corollary}\label{CH3}
With the assumptions of Theorem \ref{slopecornalbaharris}, suppose moreover that
$X$ is pure dimensional, $\phi$ is proper,  and that for large enough  $h$

(1)  the linear system $\vert \cL_{\vert F} \vert$ induces a finite rational map on the image of $F$;

(2)  the vector bundle $\phi_*\cL^h$ is nef (i.e. every quotient has non-negative degree);

(3)  the sheaves $R^i\phi_*\cL^h$ vanish for $i>0$.

\noindent Then the following inequality holds
\begin{equation}\label{equaz3}
L^n \geq n\frac{d}{h^0(F,\cL_{\vert F})}\deg \phi_*\cL,
\end{equation}
where $d$ is the degree of the image  $\phi(F)\subseteq \pr^{r}$.
\end{corollary}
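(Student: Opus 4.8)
The plan is to mimic the proof of Corollary \ref{CH2}, starting again from the effective line bundle $\cF_h = (\det\phi_*\cL)^{-hN_h}\otimes(\det\cG_h)^r$ provided by Theorem \ref{slopecornalbaharris}, but now handling the fact that $\cL_{\vert F}$ no longer embeds $F$. First I would take $\cG_h$ to be a vector subbundle of $\phi_*\cL^h$ containing the image of $\sym^h\phi_*\cL\to\phi_*\cL^h$ and equal to it generically; its general fibre is $G_h$, the image of $\sym^h H^0(F,\cL_{\vert F})\to H^0(F,\cL_{\vert F}^h)$, whose dimension is governed by the Hilbert polynomial of the image variety $\phi(F)\subseteq\pr^r$. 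Since that image is a variety of dimension $n-1$ and degree $d$, the Riemann–Roch computation for the Hilbert function of $\phi(F)$ gives $N_h = \rk\cG_h = \dfrac{d}{(n-1)!}h^{n-1} + O(h^{n-2})$. This replaces the expansion $N_h = (L_{\vert F})^{n-1}/(n-1)! + O(h^{n-2})$ used in the embedding case — it is exactly where the degree $d$ of the image, rather than the self-intersection of $\cL_{\vert F}$, enters.

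Next I would expand $\deg\cF_h = -hN_h\deg\phi_*\cL + r\deg\cG_h$ as a polynomial in $h$ and extract the leading term. For the term $r\deg\cG_h$ I need an asymptotic lower bound on $\deg\cG_h$. Here the nefness hypothesis (2) is the crucial input: since $\cG_h$ is a subsheaf of $\phi_*\cL^h$ which agrees with it generically, and $\phi_*\cL^h$ is nef, I would argue that $\deg\cG_h \geq \deg\phi_*\cL^h - (\text{correction supported at finitely many points})$, or more simply that the quotient $\phi_*\cL^h/\cG_h$ is a torsion sheaf, so $\deg\cG_h = \deg\phi_*\cL^h \geq 0$ up to lower-order contributions; combined with hypothesis (3), Riemann–Roch for the possibly singular $X$ gives $\deg\phi_*\cL^h = \deg\phi_!\cL^h = \dfrac{h^n L^n}{n!} + O(h^{n-1})$. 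Then the leading coefficient of $\deg\cF_h$, which must be $\geq 0$ because the leading part of an effective divisor class on a curve base is pseudo-effective of non-negative degree, reads
$$
-\,\frac{d}{(n-1)!}\,\deg\phi_*\cL \;+\; \frac{r}{n!}\,L^n \;\geq\; 0,
$$
and rearranging (using $r = h^0(F,\cL_{\vert F})$) yields exactly inequality (\ref{equaz3}).

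The main obstacle I expect is the bookkeeping around $\cG_h$ versus $\phi_*\cL^h$: in the embedding case one simply takes $\cG_h = \phi_*\cL^h$, but here $\sym^h\phi_*\cL$ need not surject onto $\phi_*\cL^h$, so $\cG_h$ is genuinely smaller and one must (a) verify that its rank is the Hilbert function of the \emph{image} $\phi(F)$, not of $F$ itself — this uses that $\vert\cL_{\vert F}\vert$ maps $F$ finitely onto its image, so for $h\gg 0$ the space $G_h$ of degree-$h$ forms restricted to the image has the expected dimension $d\,h^{n-1}/(n-1)! + O(h^{n-2})$ — and (b) control $\deg\cG_h$ from below using nefness of $\phi_*\cL^h$ together with the fact that $\cG_h$ and $\phi_*\cL^h$ have the same rank and agree at the generic point, so their quotient is torsion and $\deg\cG_h \le \deg\phi_*\cL^h$ with the difference of order $O(h^{n-1})$, which is absorbed into the lower-order terms and does not affect the leading coefficient. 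Once these two points are in place, the rest is the same intersection-theoretic extraction of the leading coefficient as in Corollary \ref{CH2}, and hypothesis (2) guarantees that no negative contribution from $\deg\cG_h$ can spoil the inequality.
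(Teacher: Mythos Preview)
Your overall strategy matches the paper's: start from $\deg\cF_h\geq 0$, replace $\deg\cG_h$ by $\deg\phi_*\cL^h$, and read off the leading $h^n$-coefficient via Riemann--Roch, with $N_h=d\,h^{n-1}/(n-1)!+O(h^{n-2})$ coming from the Hilbert polynomial of the image variety $\phi(F)$. Your final displayed inequality is exactly the right one.

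However, your step (b) contains a genuine error. You assert that $\cG_h$ and $\phi_*\cL^h$ ``have the same rank and agree at the generic point, so their quotient is torsion''. This is false in precisely the situation Corollary \ref{CH3} is designed for. The rank of $\cG_h$ is $N_h\sim d\,h^{n-1}/(n-1)!$, whereas the rank of $\phi_*\cL^h$ is $h^0(F,\cL_{\vert F}^h)\sim (L_{\vert F})^{n-1}h^{n-1}/(n-1)!$; and $(L_{\vert F})^{n-1}=m\cdot d$ where $m$ is the degree of the generically finite map $F\to\phi(F)$. As soon as $m>1$ these ranks differ, and the quotient $\phi_*\cL^h/\cG_h$ is locally free of positive rank, not torsion. (The coincidence of ranks is exactly what distinguishes Corollary \ref{CH2}, where one may take $\cG_h=\phi_*\cL^h$.) Relatedly, you say you need a \emph{lower} bound on $\deg\cG_h$; since $0\leq\deg\cF_h=-hN_h\deg\phi_*\cL+r\deg\cG_h$, what you actually need in order to reach your displayed inequality is an \emph{upper} bound.

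The correct fix is shorter than what you wrote and is exactly the paper's argument: hypothesis (2) says $\phi_*\cL^h$ is nef, i.e.\ every quotient has non-negative degree. Applying this to the quotient $\phi_*\cL^h/\cG_h$ (which is a genuine quotient bundle, no torsion claim needed) gives $\deg\cG_h\leq\deg\phi_*\cL^h$ immediately. Hence
\[
0\;\leq\;\deg\cF_h\;\leq\;-hN_h\,\deg\phi_*\cL\;+\;r\,\deg\phi_*\cL^h,
\]
and since this holds for infinitely many $h$, the leading $h^n$-coefficient of the right-hand side is non-negative, yielding (\ref{equaz3}).
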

\begin{proof}
Let $h$ be large enough. By the nef-ness assumption on $\phi_*\cL^h$, the degree of
$\cF_h$ is smaller or equal to the degree of
$\left(\det \phi_*\cL^{\phantom{1}} \right)^{-hN_h}\otimes \left(\det\phi_*\cL^h\right)^r.$
Then the statement follows applying Riemann-Roch for singular varieties
as in the previous corollary,  observing that (by the first assumption)
$N_h=d h^{n-1}/(n-1)!+O(h^{n-2})$.
\end{proof}


In particular,  using  the relative canonical divisor,
we can obtain the following
result on the slopes of families of certain canonical varieties.

\begin{remark}\label{canonical}
Let $\phi \colon X \longrightarrow B$ be a fibration of a normal $\Q$-factorial variety
with at most canonical singularities over a curve.
Under these assumptions $K_X$ (and $K_\phi=K_X-\phi^* K_B$) is  a Weil, $\Q$-Cartier divisor.
We can consider its associated divisorial sheaves $\omega_X$ and $\omega_\phi$.
Suppose that the canonical sheaf  $ \omega_X $ is $\phi$-nef, and that
on a general fibre $F$ the canonical divisor   $\omega_F={\omega_\phi}_{\vert F}$ induces
a  Hilbert semistable map which is finite on the image of $F$.
Then the following inequality holds
$$
K_\phi^n\geq n\,\frac{d}{p_g(F)}\deg \phi_*\om_\phi,
$$
where $p_g(F)=h^0(F, K_F)$ and $d$ is the degree of the canonical image of the general fibre
$F$ in $\pr^{p_g(F)-1}$.
In particular, if $\omega_F$ induces a birational morphism, $d=K_F^{n-1}$.

Indeed, we can apply Corollary \ref{CH3} to the relative canonical sheaf:
$\cL=\om_\phi$.
The second assumption is satisfied by \cite{Viehweg}, while
the third one derives from the relative nefness of $\omega_X$, using the relative
version of Kawamata-Viehweg vanishing Theorem (see for instance \cite{KMM}, Theorem 1.2.3).
\end{remark}

\bigskip

Although it is difficult to check the stability assumption for
varieties of dimension bigger than 1, by a result of Tian we have

\medskip
\noindent {\it (Tian \cite{Tian}) Any normal hypersurface $F \subseteq
\mathbb{P}^N $ of degree $d \geq N+2 $ with only log-terminal
singularities is Hilbert stable.}

\medskip

We will say that a variety is {\it canonical} if its canonical map
is birational onto its image. Hence se can state

\begin{theorem}\label{hypersurfaces}
Let $\phi \colon X\longrightarrow B$ be a surjective flat morphism
from a $\Q$-factorial projective  $n$-fold $X$ to a smooth
complete curve $B$. Suppose that $K_\phi$ is $\phi$-nef and that
the general fibres $F$ are minimal canonical varieties (of
dimension $n-1$) with $p_g=n+1$,  $K_F^{n-1}=n+2$ whose
 canonical image has at most log-terminal singularities. Then

$$K_\phi^n\geq \frac{n(n+2)}{n+1}\deg \phi_*\omega_\phi.
$$

\end{theorem}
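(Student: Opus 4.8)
\medskip
\noindent\textbf{Proof sketch.}
The plan is to deduce the inequality from Remark \ref{canonical} applied to the relative dualizing sheaf $\cL=\omega_\phi$; so the real content is to identify the canonical image of a general fibre and to check that it is Hilbert semistable, which will come from Tian's theorem.

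First I would analyse a general fibre $F$. By hypothesis $F$ is a minimal canonical variety of dimension $n-1$ with $p_g(F)=h^0(F,K_F)=n+1$ and $K_F^{n-1}=n+2$. I would argue that on such an $F$ the system $|K_F|$ is base-point free, so that it defines a birational morphism onto its image $\overline F$, a subvariety of dimension $n-1$ of $\pr^{p_g(F)-1}=\pr^{n}$, hence a hypersurface; being defined by a morphism, $\deg\overline F=K_F^{n-1}=n+2$. (In the model case of a smooth hypersurface $F\subseteq\pr^{n}$ of degree $n+2$ this is transparent: adjunction gives $K_F=\oo_F(1)$, which is very ample, so $F=\overline F$.) Thus $\overline F\subseteq\pr^{N}$ with $N=n$ is a hypersurface of degree $d=n+2=N+2$ whose singularities are at most log-terminal, so by the theorem of Tian quoted above $\overline F$ is Hilbert stable; since $F\to\overline F$ is birational and given by the complete system $|K_F|=|\oo_{\overline F}(1)|$, the generalised $h$-th Hilbert point of $(F,|K_F|)$ coincides with that of $\overline F$ and is therefore semistable for $h\gg0$.

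Next I would invoke Remark \ref{canonical} with $\cL=\omega_\phi$. The assumption that $K_\phi$ is $\phi$-nef is equivalent to $\omega_X$ being $\phi$-nef, because $K_X$ and $K_\phi=K_X-\phi^*K_B$ differ by a divisor pulled back from $B$; and we have just checked that $\omega_F$ induces a Hilbert-semistable birational morphism onto a variety of degree $d=n+2$, so that in the notation of Remark \ref{canonical} one has $d=K_F^{n-1}=n+2$. Remark \ref{canonical} (which in turn rests on Corollary \ref{CH3}, its remaining hypotheses --- nefness of $\phi_*\omega_\phi^h$ and vanishing of $R^i\phi_*\omega_\phi^h$ for $i>0$ --- being supplied respectively by Viehweg's semipositivity and by relative Kawamata--Viehweg vanishing, as recalled there) then gives
$$
K_\phi^n\ \geq\ n\,\frac{d}{p_g(F)}\,\deg\phi_*\omega_\phi\ =\ n\,\frac{n+2}{n+1}\,\deg\phi_*\omega_\phi\ =\ \frac{n(n+2)}{n+1}\,\deg\phi_*\omega_\phi ,
$$
which is the claimed bound.

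I expect the main obstacle to lie in the fibre analysis: showing that a general minimal canonical fibre with the prescribed invariants really has $|K_F|$ base-point free, so that its canonical image is a hypersurface of degree \emph{exactly} $n+2$ (and not of smaller degree, with $K_F^{n-1}$ absorbing base-locus contributions), and checking that $X$ carries at worst the singularities (normal, $\Q$-factorial, canonical) needed for $\omega_\phi$ to be a sensible $\Q$-Cartier sheaf and for Remark \ref{canonical} to apply. Once $\overline F$ is pinned down as a degree-$(N+2)$ hypersurface in $\pr^{N}$, Tian's stability result together with the Cornalba--Harris machinery of Chapter 2 finishes the argument with no further choices.
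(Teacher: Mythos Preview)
Your proposal is correct and follows exactly the paper's approach: apply Tian's stability theorem to the canonical image of a general fibre (a degree-$(n+2)$ hypersurface in $\pr^n$), then invoke Remark \ref{canonical} to extract the numerical inequality. The paper's own proof is a single sentence citing precisely these two ingredients; you have unpacked the argument more carefully and, in particular, correctly flagged the implicit assumption that $|K_F|$ is base-point free (so that the image has degree exactly $n+2$ and Tian's bound $d\geq N+2$ is met), a point the paper leaves tacit.
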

\begin{proof} It follows straightforward from the argument of Remark \ref{canonical} and Tian's
theorem.
\end{proof}
For instance a one-parameter family of surfaces with $p_g=4$ $q=0$ and $K^2=5$
such that the general fibre is of type $(I)$ in Horikawa classification (\cite{hor}, Theorem 1, sec.1)
satisfies the conditions of the above theorem; indeed these surfaces
have base-point-free birational canonical map, and their canonical image
is a $5$-ic surface in $\pr^3$ with at most rational double points.

\begin{remark}\label{controhno}
We can now give a new  example that show the fact that the stability
condition in the C-H method is necessary, in addition to the one
given by Morrison in section 3 of \cite{C-H}.

In \cite{ohno}, Example on page 664, a fibred $3$-fold
$\phi\colon T\longrightarrow B$ is
constructed fitting in the following diagram

$$
\xymatrix{
T  \ar[r]^\pi \ar[d]_\phi & W \ar[r]^\beta \ar[dl]^\alpha & {\mathbb P}_B(\phi_*\om_\phi)\ar[dll]\\
B& &\\}
$$

\noindent  such that

\begin{itemize}

\item the general fibre of $\phi$ is a surface of general type with
$p_g=4$, $q=0$ and $K^2_F=4$, and such that its canonical map is a
degree 2 base point free map on to a quadric cone in
$\mathbb{P}^3$.

\item the map $\pi$ is a smooth double cover of a
$\mathbb{P}^2$-bundle $W$ over $B$ such that $
 \phi_*\omega_\phi=
  \alpha_*(\omega_\alpha\otimes \cL)\oplus \alpha_*\omega_{\alpha}=
 \alpha_*(\omega_\alpha\otimes \cL)$ (because
$\alpha_*\omega_{\alpha}=0$, being the generic fibre of $\alpha$ a
rational surface)

\item the composition $\beta \circ \pi$ is the relative canonical
map of $\phi$

\item the slope of $\phi$ is $20/7$ (see \cite{ohno} page 665 with
e=2).

\end{itemize}

On the other hand, as $\omega_\alpha \otimes \cL$ induces on the
general fibres of $\alpha$ the natural map as a quadric cone in $
\pr^3$, by the same argument used by Konno in \cite{konnotrig},
Lemma 1.1, we can conclude that $R^i\alpha_*(\omega_\alpha\otimes
\cL)^h=0$ for $i, h>0$.

It is well known that the quadric cone is Hilbert unstable.
Assume nevertheless that we could apply Theorem \ref{slopecornalbaharris} to
$\omega_\phi$.
Consider the morphism of sheaves
$$
\sym^h \phi_*\omega_{ \phi}\longrightarrow \phi_*\omega_{ \phi}^h.
$$
We can choose as $\cG_h$ (in the notations of Theorem
\ref{slopecornalbaharris}) the sheaf
$\alpha_*(\omega_\alpha\otimes \cL)^h$. Computing now the degree
$3$ coefficient  of $\deg \cF_h$, we obtain
$$
\frac{K_{ \phi}^3}{2}=\pi^*(K_\alpha+L)^3\geq \frac{3}{2}\deg
{\phi}_*\omega_{\phi}.
$$

\noindent and hence the slope would be at least 3, a
contradiction.
\end{remark}


\section{The slope of trigonal fibrations}

Let $f:S \longrightarrow B$ be a relatively minimal fibred surface
such that the general fibre $C$ is a trigonal curve of genus $g$.

\begin{remark}\label{maroni}
If $C$ is a trigonal curve of genus $g$, it is a well known fact (see for instance \cite{Mar} and \cite{S-D})
that its canonical image lives on a Hirzebruch surface
$\F_c=\mathbb{P}(\cO_{\mathbb{P}^1}\oplus \cO_{\mathbb{P}^1}(c))$
embedded in $\mathbb{P}^{g-1}$ as a surface of minimal degree.
The surface $\F_c$  is the intersection of quadrics containing the canonical image of $C$ in $\pr^{g-1}$;
from a more geometric point of view, it is  the rational normal scroll
generated by the lines spanned by the divisors in the $g^1_3$ on the canonical image of $C$.
The number $c$ is called the {\it Maroni invariant of $C$};
it has the same parity of $g$, and satisfies the following inequalities:
$$
 \frac{g-2}{3}\leq c\leq \frac{2g-2}{3}.
$$

It has been shown in \cite{M-S} that  the Maroni invariant is an upper semicontinuos function on the trigonal locus $\mathcal D_3$,
and hence  a general genus $g$  trigonal curve  has Maroni invariant $0$ (resp.  $1$)
if  $g$ is even (resp. odd).
The locus of points in $\mathcal D_3$ corresponding to curves with Maroni invariant $>1$  has codimension $1$
the even case, while it has strictly bigger codimension in the odd one.
\end{remark}

We can extend the construction mentioned above on the fibres of $f$ to a relative
setting, using the so-called relative hyperquadric hull (see e.g. \cite{konnotrig} and \cite{Bcan}).
Consider the relative canonical image of $S$:

$$\xymatrix{
S \ar@{-->}[r]^-\psi \ar[d] &Y \subseteq
{\mathbb{P}}_B({f_*\om_f})={\mathbb{P}}
\ar[dl]^\varphi \\
B}$$


\noindent If $\cA \in  \pic B$ is ample enough it can be easily checked that
we have an epimorphism
$$\xymatrix{
H^0({\mathcal {J}}_{Y,{\mathbb{P}}}(2)\otimes
\varphi^{\ast} (\cA)) \ar@{->>}[r] & H^0({\mathcal {J}}_{F,
{\mathbb{P}}^{g-1}}(2)).}$$

Let $W_0$ be the horizontal irreducible component of the base locus
of the linear system  on $\mathbb{P}$ given by the sections of
 $H^0({\mathcal {J}}_{Y,{\mathbb{P}}}
(2 ) \otimes {\varphi}^{\ast}(\cA)))$. Since the
general fibre $C$ is trigonal,  $W_0$ is a threefold fibred over $B$
by rational surfaces of minimal degree.
Notice moreover that for $g\geq 5$ the singular locus of $W_0$ is contained in a finite
number of fibres.
Let $W$ be a desingularization of $W_0$ and let $L$ be the pull-back of the
tautological divisor of $\mathbb{P}$ to $ W$.
We will call ${W}$ the {\em relative quadric hull} associated to $f$
and denote by $\phi \colon { W} \longrightarrow B$ the
induced fibration.
The fibre of $\phi$  over general  $t\in B$ coincides with the one of $W_0$ (hence
it is the rational normal scroll
associated to the fibre of $S$ over $t$).

The main facts we will need about the divisor  $L$ have been proved by Konno
in \cite{konnotrig} (cf. Lemma 1.1 and Lemma 1.2).
\begin{proposition}[Konno]\label{Konno}

\quad

i) $\phi _* \cO_{W}(L)=f_*\of$;

ii) $R^p\phi_* \oo_W (hL)=0$ for $p,h>0$;

iii) $K^2_f \geq 2 \chi _f + L^3 $.
\end{proposition}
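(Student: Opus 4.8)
The plan is to reduce everything to fibrewise statements and then invoke base change, exactly in the spirit of Konno's original arguments. First I would set up notation: $W_0 \subseteq \mathbb{P} = \mathbb{P}_B(f_*\omega_f)$ is the horizontal component of the base locus of $H^0(\mathcal{J}_{Y,\mathbb{P}}(2)\otimes \varphi^*\cA)$, its general fibre $W_t$ is the rational normal scroll (a Hirzebruch surface $\F_c$ of minimal degree $g-2$ in $\pr^{g-1}$, with $c$ the Maroni invariant of $C_t$), and $L$ restricts on each such fibre to the hyperplane class $H_t$ of that embedding. Since $W$ is a desingularization agreeing with $W_0$ over general $t$, the general fibre of $\phi\colon W\to B$ is this same scroll and $L_{|F}$ is its minimal-degree embedding line bundle.

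For \emph{(i)}, I would compute $H^0(W_t, \cO_{W_t}(H_t))$: for a surface of minimal degree $g-2$ in $\pr^{g-1}$ the hyperplane sections are exactly the embedding, so $h^0(W_t,\cO(H_t)) = g = h^0(C_t,\omega_{C_t})$, and moreover the restriction map $H^0(\mathbb{P}_t, \cO(1)) \to H^0(W_t, \cO(H_t))$ is an isomorphism (the scroll is linearly normal). By cohomology and base change, $\phi_*\cO_W(L)$ is locally free of rank $g$ and the natural map $\varphi_*\cO_{\mathbb P}(1) = f_*\omega_f \to \phi_*\cO_W(L)$ is an isomorphism at general $t$; since both are vector bundles of the same rank and the map is generically an isomorphism, a small argument (or direct identification using that $W_0$ is cut out inside $\mathbb P$ fibrewise) gives $\phi_*\cO_W(L) = f_*\omega_f$. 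For \emph{(ii)}, I would show $H^p(W_t, \cO_{W_t}(hH_t)) = 0$ for $p>0$, $h>0$: on the Hirzebruch surface $\F_c$ the embedding line bundle $H_t$ is a multiple of a section plus fibres, explicitly $H_t \equiv C_0 + \tfrac{g+c-2}{2}\, \Gamma$ in the standard basis with $C_0^2 = -c$, and one checks by the usual Hirzebruch-surface cohomology vanishing (Serre duality plus the fact that $H_t$ and its positive multiples are ample, hence have no higher cohomology on a rational surface) that $H^p(W_t, hH_t) = 0$ for $p,h>0$. Then relative Kawamata–Viehweg / cohomology and base change upgrades this to $R^p\phi_*\cO_W(hL) = 0$ for $p,h>0$ on the desingularization (the singular fibres being finitely many and absorbed by the vanishing theorem, as in Konno).

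For \emph{(iii)}, the inequality $K_f^2 \geq 2\chi_f + L^3$, I would compare $S$ with $W$ via the rational map $S \dashrightarrow W$ whose general fibre realizes $C_t \hookrightarrow \F_{c}$ as a trisection of the scroll. Resolving the indeterminacy, one gets a morphism from a blow-up $\tilde S \to S$ to $W$; the key relation is that on each fibre $C_t$ is a divisor in the linear system $3C_0 + m\Gamma$ on $\F_c$ for the appropriate $m$, so in $W$ one has a relation of the form $[\tilde S] \sim 3(\text{tautological class}) + \phi^*(\text{divisor on }B)$. Pushing and pulling the relevant intersection numbers through $\pi\colon \tilde S \to S$ and the inclusion into $W$, and using adjunction on $\tilde S$ together with $\omega_f = \cO_S(K_f)$ restricting to $\omega_{C_t}$, yields $K_f^2$ in terms of $L^3$, $L^2\cdot\phi^*(\text{pt})$ and correction terms from the blow-up; the blow-up corrections and the slope/Maroni bookkeeping give precisely the error term $2\chi_f$ (via Noether's formula $\chi_f = (K_f^2 + e_f)/12$ relating it to Euler characteristics of fibres) with the correct sign, the inequality rather than equality coming from the non-negativity of the contributions of the exceptional divisors and of the vertical components discarded in passing to $W_0$.

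The main obstacle I expect is part \emph{(iii)}: keeping careful track of the blow-up needed to resolve $S\dashrightarrow W$ and of the vertical divisors dropped when restricting to the horizontal component $W_0$ of the base locus, so that all the error terms assemble into exactly $2\chi_f$ with the right inequality direction. Parts \emph{(i)} and \emph{(ii)} are essentially formal once the fibrewise geometry of surfaces of minimal degree and cohomology-and-base-change are in hand; since the statement is quoted from \cite{konnotrig} (Lemmas 1.1 and 1.2), I would in fact cite Konno for \emph{(iii)} and only reproduce the short fibrewise computations for \emph{(i)} and \emph{(ii)}.
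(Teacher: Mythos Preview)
The paper does not actually prove this proposition at all: it is stated as a quotation of Konno's results, with the explicit attribution ``proved by Konno in \cite{konnotrig} (cf.\ Lemma 1.1 and Lemma 1.2)'' immediately preceding the statement, and no proof is given. Your proposal, which sketches the fibrewise computations for (i) and (ii) and outlines the resolution-of-indeterminacy argument for (iii) before ultimately agreeing to cite Konno, therefore goes \emph{beyond} what the paper does; in particular your closing remark that you ``would in fact cite Konno'' is exactly what the paper does, for all three parts.

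Your sketch itself is broadly sound and follows Konno's line. Two small caveats: in (i), ``same rank plus generically isomorphic'' does not by itself force an isomorphism of vector bundles on a curve (one needs injectivity, or equal degree, or to argue via the inclusion $W_0\hookrightarrow\mathbb{P}$ directly as you hint); and in (iii), the appearance of $2\chi_f$ in Konno's argument is not really via Noether's formula on $S$ but rather via the free-pencil trick / Max Noether relation expressing $\deg f_*\omega_f^{\otimes 2}$ in terms of $\chi_f$ and the degree of the multiplication map, together with the identification of $L^3$ with a difference of such degrees. These are refinements rather than gaps, and since the paper itself simply cites \cite{konnotrig}, your proposal is entirely adequate.
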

We can now state the main result of this chapter.

\begin{theorem}\label{slopetrigonal}
Let $f\colon S \longrightarrow B$ be a relatively minimal fibred surface
such that the general fibre $C$ is either:

$\bullet$  a trigonal curve of even genus $g\geq 6$
and zero Maroni invariant;

$\bullet$ a curve of genus $6$ with a $g^2_5$.

\noindent Then
$$s_f \geq \frac{5g-6}{g},$$
and this bound is sharp.
\end{theorem}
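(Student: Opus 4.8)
The plan is to apply the Cornalba--Harris method not to $f$ itself but to the relative quadric hull $\phi\colon W\longrightarrow B$, and then to transfer the resulting inequality back to $S$ via Proposition \ref{Konno}. First I would observe that it suffices to prove
$$
L^3\ \geq\ \frac{3(g-2)}{g}\,\chi_f ,
$$
where $\chi_f=\deg f_*\om_f$. Indeed, combining this with part (iii) of Proposition \ref{Konno} gives
$$
K_f^2\ \geq\ 2\chi_f+L^3\ \geq\ \Bigl(2+\frac{3(g-2)}{g}\Bigr)\chi_f\ =\ \frac{5g-6}{g}\,\chi_f ,
$$
and since $\deg\phi_*\cO_W(L)=\deg f_*\om_f=\chi_f$ by part (i) of the same proposition, this is exactly $s_f\geq (5g-6)/g$.

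To obtain the displayed bound I would apply Corollary \ref{CH2} to $\phi\colon W\longrightarrow B$ with $\cL=\cO_W(L)$. By Remark \ref{maroni} and the construction of the hull, the general fibre $F$ of $\phi$ is, in both cases of the statement, a linearly normal surface of minimal degree $g-2$ in $\pr^{g-1}$: it is $\pr^1\times\pr^1$ embedded by $\cO(1,\tfrac{g-2}{2})$ when $C$ is trigonal of even genus with zero Maroni invariant, and the Veronese surface $v_2(\pr^2)\subset\pr^5$ when $C$ is a plane quintic ($g=6$). Hence $|\cL_{\vert F}|$ induces an embedding, which is hypothesis (1) of Corollary \ref{CH2}; hypothesis (2), namely $R^i\phi_*\cO_W(hL)=0$ for $i>0$, is part (ii) of Proposition \ref{Konno}; and since a surface of minimal degree is projectively normal, the image of $\sym^h\phi_*\cL\to\phi_*\cL^h$ coincides with $\phi_*\cL^h$ at the general point of $B$, so one may take $\cG_h=\phi_*\cL^h$ as in the proof of that corollary. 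The remaining ingredient is the standing hypothesis of Theorem \ref{slopecornalbaharris}, that $(F,|\cL_{\vert F}|)$ has semistable generalised $h$-th Hilbert point for infinitely many $h$; this is precisely where the hypotheses on $C$ enter, and it holds because a variety of minimal degree --- here $\pr^1\times\pr^1$ embedded by $\cO(1,n)$, respectively the Veronese surface --- is GIT semistable, by Kempf's theorem (\cite{K}). Granting this, Corollary \ref{CH2} yields
$$
L^3\ \geq\ 3\,\frac{(L_{\vert F})^2}{h^0(F,\cL_{\vert F})}\,\deg\phi_*\cL\ =\ 3\cdot\frac{g-2}{g}\cdot\chi_f ,
$$
using $(L_{\vert F})^2=\deg F=g-2$ and $h^0(F,\cL_{\vert F})=g$; this is the inequality required above.

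I expect two points to need care. The first, and the main obstacle, is the genus-$6$ case in which the general fibre carries a $g^2_5$ rather than a $g^1_3$: the construction of the hull and the three statements of Proposition \ref{Konno} are established by Konno for trigonal fibrations, and one must check that they survive when the surface of minimal degree is the Veronese rather than a rational normal scroll --- concretely, the surjectivity onto $H^0(\cJ_{F,\pr^{g-1}}(2))$, the identity $\phi_*\cO_W(L)=f_*\om_f$, and above all the inequality $K_f^2\geq 2\chi_f+L^3$. The argument should go through unchanged, the only feature used being that the canonical image of $C$ is cut out by quadrics inside a surface of minimal degree. The second point is that $W$ is only a desingularization of the hull $W_0$, so that $L$ is a pull-back and one must verify, as in \cite{konnotrig}, that the intersection numbers $L^3$ and $\deg\phi_*\cO_W(L)$ occurring in Corollary \ref{CH2} are the intended ones. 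The GIT-semistability of the minimal-degree surface, although conceptually the heart of the matter, enters only through the citation to \cite{K}.

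For sharpness I would exhibit a family attaining equality at every step. Taking a $\pr^1\times\pr^1$-bundle $W\to B$ that is as balanced as possible and letting $S$ be a general smooth member of the system cutting out curves of relative bidegree $(3,\tfrac{g+2}{2})$ on the fibres, the general fibre of $S\to B$ is a trigonal curve of genus $g$ with zero Maroni invariant; with the twisting data chosen symmetrically both the Cornalba--Harris inequality and the inequality of Proposition \ref{Konno}(iii) become equalities, whence $s_f=(5g-6)/g$. Alternatively one may construct such an example along the lines of \cite{S-F}.
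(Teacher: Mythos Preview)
Your argument is correct and follows essentially the same route as the paper: apply Corollary~\ref{CH2} to the relative quadric hull $\phi\colon W\to B$ with $\cL=\cO_W(L)$, use Kempf's theorem for the Hilbert semistability of the minimal-degree surface ($\pr^1\times\pr^1$ or the Veronese), and feed the resulting inequality $L^3\geq 3(g-2)/g\cdot\deg\phi_*\cO_W(L)$ into parts (i) and (iii) of Proposition~\ref{Konno}. Your sharpness construction differs in detail from the paper's (which blows up $\F_c$ along the base locus of a general pencil in $|C|$ with $c<(g+2)/9$, following Tan), but is in the same spirit.
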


\begin{proof}
Using the relative quadric hull associated to $f$,
the general fibre $F$ of $\phi\colon {W} \longrightarrow B$ is just
$\pr^1\times\pr^1$ in the trigonal case and $\pr^2$ in the case of a plane quintic.
The restriction of $L$ to $F$ induces a complete embedding of $F$ in
$\pr^{g-1}$ as a surface of minimal degree.
This is Hilbert semistable according, for instance,  to a result of Kempf (cf. \cite{K} cor. 5.3);
Moreover, $\oo_W(L)$ has no higher relative cohomology by  Proposition \ref{Konno}, $(ii)$.
We can therefore apply Corollary \ref{CH2} and conclude that
$$L^3 \geq 3\frac{g-2}{g} \, {\rm deg} \phi_*\cO_{W}(L).$$
The statement now  follows using inequality $(i)$ and $(iii)$ of Proposition \ref{Konno}.

As for the sharpness of this bound, we refer to Example \ref{sharp} below.
\end{proof}

\begin{example}\label{sharp}
Using a construction of Tan \cite{Tanslope}, we can prove that any trigonal curve with Maroni invariant
{\em strictly} smaller than $(g+2)/9$ can be realized as the fibre of a semistable fibration over $\pr^1$ with
slope $(5g-6)/g$.
Let $C$ be a trigonal curve with Maroni invariant $c$.
Recall that $\mbox{Pic} \F_c=\Z[\ell]\oplus \Z[f]$, where $\ell$ is the negative section ($\ell^2=-c$) and
$f$ is a fibre of the ruling $\F_c\rightarrow \pr^1$.
The class of $C\subset \F_c$ is $3\ell+kf$, where
$k=(g+2+3c)/2$.

As proved for instance in \cite{Har}V.Cor 2.18, the linear system $\mid 3\ell + kf\mid$ is very ample if and only if
$k>6c$, that is $9c<g+2$.
In this case, we can choose a general pencil in $\mid C\mid$.
It has $C^2=( 3\ell + kf)^2=6k-9c=3g+6$ base points.
Let $S$ be the blow up of $\F_c$ in these base points, and $f\colon  S\longrightarrow \pr^1$ the induced fibration.
Computing the relative invariants, we obtain $K_f^2=5g-6$ and $\chi_f=g$.

In \cite{konnocliff}, Example 4.6., other examples reaching the bound are provided,
satisfying the condition that  the bundle $f_*\omega_f$ is semistable.
\end{example}

\begin{remark}
The higher Maroni invariant cases cannot be treated with the C-H method.
Recall that the general fibre of $W$ is an Hirzebruch surface
$\F_c$ embedded in $\pr^{g-1}$ by the divisor $D=\ell+\frac{g+c-2}{2}f$;
$D$ is a ``good'' divisor in the notation of  \cite{Mor}, and by Theorem 6.5 of the same paper,
the associated embedding is Chow unstable (hence Hilbert unstable) if and only if $c>0$.

On the other hand, Xiao's method has been applied to this setting (regardless to the Maroni invariant) by Konno
in \cite{konnotrig},  and leads to the bound (\ref{trigk}).

This seems to suggest that the two methods of Cornalba-Harris and of Xiao,
while being surprisingly similar in the case of fibred surfaces (cf \cite{B-S}),
become substantially different when applied to fibrations whose total space has
dimension $\geq 3$.
\end{remark}

\begin{remark}\label{genericity}
As it is well known, gonality provides a stratification of the moduli space of smooth curves ${\mathcal M}_g$.
Indeed,  the loci
$$\mathcal D_k:=\overline{\left\{[C]\in {\mathcal M}_g \mbox{ such that } C \mbox{ has a } g^1_k\right\}}\subseteq {\mathcal M}_g$$
are closed subsets of ${\mathcal M}_g$ of decreasing codimension as $k$ goes from $2$ to $[(g+3)/2]$.
The curves with maximal gonality $[(g+3)/2]$ form an open set.
It has been proved (\cite{A-K} and \cite{Ha-Mu} in the semistable case) that
if $f\colon S\longrightarrow B$ is a fibred surface of  odd genus and such that the general fibres have maximal gonality,
then
$s(f)\geq 6(g-1)/(g+1)$.
Moreover, the slope inequality $s(f)\geq 4(g-1)/g$, that holds for any fibres surface, is an equality only for some hyperelliptic
fibrations.
It seems therefore natural to conjecture an increasing bound for the slope of fibred surfaces
depending on the gonality of the general fibres.
A natural guess  would be that the slope of non-hyperelliptic fibred surfaces should satisfy at least
the bounds for trigonal fibrations (\ref{trigk}).
This is however false, as observed for instance in \cite {BPhD} and \cite{S-F}:
the easiest counterexamples are provided by
bielliptic surfaces of arbitrarily large genus,  with slope $4$.

The right question to ask when looking for a bound increasing with gonality
seems to be  that the fibre are ``general'' in the $k$-gonal locus:
 (see in particular Conjecture 13.3 of \cite{S-F}).
From this point of view, the bound (\ref{general}) could be the first step of the desired sequence.

It  is worth mentioning that Konno proves the same bound (\ref{general}) in \cite{konnocliff}
(corollary 4.4) under the assumptions that the fibration is non-hyperelliptic,
and  that  {\it $f_*\of $ is a semistable vector bundle.}
The assumption of semistability for $f_*\of $ is difficult to interpret; it would be very interesting
to understand whether  it is connected with some kind of  ``genericity'' of
the general fibre.
\end{remark}

\addcontentsline{toc}{section}{References}

\bigskip
\noindent Miguel \'Angel Barja,  Departament de Matem\`atica  Aplicada I, Universitat Polit\`ecnica de Catalunya,
 ETSEIB Avda. Diagonal, 08028 Barcelona (Spain).\\
E-mail: \textsl{Miguel.Angel.Barja@upc.edu}

\bigskip
\noindent Lidia Stoppino, Dipartimento di Matematica, Universit\`a di Pavia, Via Ferrata 1  27100, Pavia  (Italy).
E-mail: \textsl {lidia.stoppino@unipv.it}.

\end{document}